\documentclass{amsart}
\usepackage[utf8]{inputenc}
\usepackage{amssymb}
\usepackage{amsmath}
\usepackage{amsthm}
\usepackage{orcidlink}
\usepackage{hyperref}
\usepackage{xcolor}
\usepackage{enumitem}
\numberwithin{equation}{section}
\usepackage[left=4cm,top=4cm,right=4cm,bottom=4cm]{geometry}
\usepackage[initials]{amsrefs}

\newcommand{\IR}{\mathbb{R}}
\newcommand{\IN}{\mathbb{N}}

\newcommand{\Lip}{\operatorname{Lip}}
\newcommand{\lip}{\operatorname{lip}}
\newcommand{\ILip}{\operatorname{\mathbb{L}ip}}
\newcommand{\Int}{\operatorname{Int}}

\theoremstyle{theorem}
\newtheorem{theorem}{Theorem}[section]

\newtheorem{proposition}[theorem]{Proposition}

\theoremstyle{definition}

\newtheorem{problem}[theorem]{Problem}

\begin{document}
\title[Sets in metric spaces where the big Lipschitz derivative is infinite]{Characterization of sets in metric spaces where the big Lipschitz derivative is infinite}

\author[Llorente]{Jes\'us Llorente*  \orcidlink{0000-0001-7528-7366}}
\address[Jes\'us Llorente]{\mbox{}\newline\indent Instituto de Matem\'atica Interdisciplinar (IMI), Departamento de An\'{a}lisis Matem\'{a}tico y Matem\'atica Aplicada, Facultad de Ciencias Matem\'aticas, Plaza de Ciencias 3, Universidad Complutense de Madrid, 28040 Madrid, Spain.}\email{jesllore@ucm.es}

\author[Maslyuchenko]{Oleksandr Maslyuchenko \orcidlink{0000-0002-1493-9399}} 
\address[Oleksandr Maslyuchenko]{\mbox{}\newline\indent Institute of Mathematics, Bankowa 12, University of Silesia in Katowice, 40-007 Katowice, Poland. }\email{ovmasl@gmail.com}

\author[Wójcicki]{Ziemowit Wójcicki \orcidlink{0009-0005-6433-0708}} 
\address[Ziemowit Wójcicki]{\mbox{}\newline\indent Institute of Mathematics, Bankowa 12, University of Silesia in Katowice, 40-007 Katowice, Poland.}\email{ziemo1@onet.eu}

\thanks{2020 \emph{Mathematics Subject Classification.} Primary 54E40, 54E35, 46G05, 46T20; Secondary 26A16, 26A21. } 
\keywords{big Lipschitz derivative, local Lipschitz derivative, generalized Takagi functions, porosity, metric spaces.
}

\thanks{The research was supported by the University of Silesia in Katowice, Mathematics Department (Iterative Functional Equations
and Real Analysis program). The first author was also supported by Grant PID2025-168101NB-I00 funded by MICIU/AEI/10.13039/501100011033, and by Grant PID2022-138758NB-I00 funded by the Spanish Ministry of Science and Innovation.}
\thanks{*Corresponding author}

\begin{abstract}
Let $X$ be a metric space without isolated points. In this paper we prove that $E$ is a $G_\delta$-set in $X$ if and only if there exists a continuous function $f\colon X\to\IR$ whose big Lipschitz derivative
is infinite precisely on $E$. To this end, we introduce a new generalization of the classical Takagi function to the setting of metric spaces. 
\end{abstract}

\maketitle

\section{Introduction}

In the one dimensional case, the classical Rademacher theorem states that every Lipschitz function $f:\mathbb{R}\to \mathbb{R}$ is differentiable almost everywhere. The Lipschitz assumption in this theorem can be replaced by a weaker condition formulated in terms of the big Lipschitz derivative of the function, while preserving the same conclusion.

For a continuous function $f:\mathbb{R}\to \mathbb{R}$, the big Lipschitz derivative is defined by 
\begin{equation}\label{LipDef}
\Lip f(x)=\limsup_{r\to 0^+}\,\sup_{u\in (x-r,x+r)}\frac{|f(u)-f(x)|}{r}.
\end{equation}
The Rademacher-Stepanov theorem asserts that $f$ is differentiable almost everywhere on the set $\{x\in \mathbb{R}: \Lip f(x)<\infty\}$. This result generalizes Rademacher's theorem since $f$ is $K$-Lipschitz if and only if $\Lip f(x)\leq K$ for all $x\in \mathbb{R}$. 

By replacing the lim sup in definition \eqref{LipDef} with a lim inf, we obtain the little Lipschitz derivative, which is denoted by $\lip f$. This notion appeared in the work of J.~Cheeger (see \cite{Cheeger}) and has since been extensively
studied in the literature (see \cite{BuHaMaVe2020}, \cite{HLL}, \cite{HLLS}, \cite{Keith} or \cite{RmZu}, for instance). Note that if $f$ is differentiable at $x\in\mathbb{R}$, then
$$
\Lip f(x)=\lip f(x)=|f'(x)|.
$$
Furthermore, it has been shown that the Rademacher–Stepanov theorem fails when the set $\{x\in\mathbb{R}:\Lip f(x)<\infty\}$ is replaced by $\{x\in\mathbb{R}:\lip f(x)<\infty\}$ (see Theorem 1.4 of \cite{BaCs}).

The local Lipschitz derivative of a continuous function $f:\mathbb{R}\to \mathbb{R}$ is defined by 
\begin{equation}\label{LocalDef}
\ILip f(x)=\inf_{r>0}\, \,\sup_{\substack{u,v\in (x-r,x+r)\\ u\neq v}} \frac{|f(u)-f(v)|}{|u-v|}.
\end{equation}
This quantity was considered by V. Herasymchuk and the second author in the more general setting of metric spaces (see \cite{HerMas}). Note that if $f$ is differentiable, then 
$$
\ILip f(x) = \limsup_{u\to x}|f'(u)|.
$$
Moreover, if $f$ is of class $C^1$, then $\ILip f(x)=|f'(x)|$.

The Lipschitz derivatives of a function can behave very differently. For instance, there exists a continuous nowhere differentiable function  $f:\mathbb{R}\to \mathbb{R}$ such that $\Lip f(x)=\infty$ for all $x\in \mathbb{R}$ and $\lip f(x)=0$ except on a set of Hausdorff dimension zero (see Theorem~2.3 of \cite{H}). Interestingly, in the sense of Baire category, a typical continuous function $f:\mathbb{R}\to\mathbb{R}$ satisfies $\Lip f(x)=\infty$ for every $x\in\mathbb{R}$ and  $\lip f(x)=0$ for almost every $x\in\mathbb{R}$ (see \cite{Ba} and Theorem 4.2 in \cite{BuHaRmZu}).

On the other hand, there are certain restrictions on the
possible behavior of these quantities. In particular, there is no continuous function $f:\mathbb{R}\to\mathbb{R}$
such that $\Lip f(x)=\infty$ and $\lip f(x)<\infty$ for every $x\in\mathbb{R}$.  This is a consequence of the fact that if
$f:\mathbb{R}\to\mathbb{R}$ is continuous and the set $\{x\in\mathbb{R}:\lip f(x)=\infty\}$ is countable, then $f$ is differentiable on a set of positive Lebesgue measure (see Lemma 1.1 in \cite{BaCs}).

Following this line of research, considerable work has recently been devoted to characterizing those sets
$E\subseteq\mathbb{R}$ for which there exists a continuous function $f:\mathbb{R}\to\mathbb{R}$ such that its Lipschitz derivatives exhibit prescribed behaviors precisely on $E$ (see \cite{BuHaMaVe2021}, \cite{Ha} or \cite{Ha1}, for instance). In particular, it has been proved that a subset $E$ of $\mathbb{R}$ is a $G_\delta$ set if and only if there exists a continuous function $f:\mathbb{R}\to\mathbb{R}$ such that  $\{x\in \mathbb{R}: \Lip f(x)=\infty\}=E$ (see Lemma 2.4 and Theorem 3.35 of \cite{BuHaRmZu}).

Since Lipschitz derivatives can also be defined in metric spaces, it is natural to ask whether some of the previous results
can be extended to this more general setting. In this direction, the last two authors have studied the semicontinuity properties of Lipschitz derivatives of functions defined on metric spaces (see \cite{MaWo2026-1}).

Concerning the characterization of sets where the big Lipschitz derivative is infinite, they also proved that
if $E$ is an open subset of a metric space $X$ without
isolated points, then there exists a continuous function
$f:X\to\mathbb{R}$ such that the set 
$$
(\Lip f)^{-1}(\infty)=\{x\in X: \Lip f(x)=\infty\}
$$
equals $E$ (see \cite{MaWo2025}). Subsequently, a similar result was established for closed subsets of $X$ (see \cite{MaWo2026}). 

In this paper, we extend these results by proving that if $E$ is a $G_\delta$-set in a metric space $X$ without isolated points, then there exists a continuous function $f\colon X\to\IR$ such that $(\Lip f)^{-1}(\infty)=E$. To establish this result, we introduce a generalization of the classical Takagi function  to metric spaces.  Since the set $(\Lip f)^{-1}(\infty)$ is a $G_\delta$ set for every continuous function
$f\colon X\to\IR$ defined on a metric space $X$ (see Corollary 5.6 of \cite{MaWo2026-1}), our result provides a complete characterization of such sets in metric spaces without isolated points, extending the corresponding characterization
previously established for functions on the real line.

\section{Preliminaries}\label{sec:Preliminaries}
We begin by introducing the notation used throughout the paper. We write $ \IN_0 = \{0,1,2,\ldots\}$, $\IN= \{1,2,\ldots\}$  and use the symbol ``$:=$'' to denote equality by definition. 

For an arbitrary sequence of elements $a_n$, $n\in \IN_0$, we write $\boldsymbol{a}=(a_n)_{n=0}^\infty$. The boldface symbol $\boldsymbol{a}$ is reserved for this sequence, whereas the corresponding nonbold
symbol $a$ may be used for other objects. We follow the same convention for other sequences.

In all limit expressions, such as $\limsup_{u\to x}f(u)$ or $\limsup_{(u,v)\to (x,x)}f(u,v)$, the variables are understood to range over the domain of the function involved.

When taking infima and suprema of nonnegative quantities, we adopt the conventions $\inf\varnothing=\infty$ and $\sup\varnothing=0$.

Throughout the paper, $(X,d)$ is a metric space without isolated points.  For $x\in X$ and $r>0$, the open and closed balls centered at $x$ with radius $r$ are denoted by $B(x,r)$ and $B[x,r]$, respectively.

A function $f\colon X\to\IR$ is called Lipschitz if there is a constant $K>0$ such that $|f(x)-f(y)|\leq K d(x,y)$ for all $x,y\in X$. In such a case, it is said that $f$ is $K$-Lipschitz.

The distance function to a non-empty set $E\subseteq X$ is defined by
\begin{equation*}
	d_E(x)=d(x,E)=\inf_{y\in E}d(x,y) 
\end{equation*} 
for any $x\in X$. It is well known that $d_E$ is 1-Lipschitz.

The \emph{local} and \emph{big Lipschitz derivatives} of a function $f\colon X\to\IR$ are defined, respectively, by
\begin{align*}
	\Lip f(x)&=\limsup_{r\to 0^+}\sup_{u\in B(x,r)}\frac{\big|f(u)-f(x)\big|}{r}\, \,  \text{ and }\\
\ILip f(x)&=\inf_{r>0}\, \, \sup_{\substack{u,v\in B(x,r)\\ u\neq v}} \frac{|f(u)-f(v)|}{d(u,v)}
\end{align*}
for any $x\in X$. It is not hard to show that 
\begin{align*}
\Lip f(x)&=\limsup_{u\to x}\frac{\big|f(u)-f(x)\big|}{d(u,x)}\, \,\text{ and }\\
	\ILip f(x)&=\limsup_{(u,v)\to (x,x)}\frac{\big |f(u)-f(v)\big |}{d(u,v)}
\end{align*}
for any $x\in X$ (see Proposition 2.1 of \cite{MaWo2026-1} for the first equality). 

A function $f\colon X\to\IR$ is \emph{locally} (resp. \emph{pointwise}) \emph{Lipschitz} at a point $x\in X$ if $\ILip f(x)<\infty$ (resp. $\Lip f(x)<\infty$), and on a set $A\subseteq X$ if the corresponding condition holds at every point of $A$.

Note that all the previous definitions extend naturally to functions between two metric spaces.

\section{The Takagi function in the metric setting}

The Takagi function is a classical example of a continuous nowhere differentiable function (see \cite{Ta}).  It has attracted considerable interest among mathematicians over the years owing to its remarkable properties from various perspectives. In modern notation, it is defined by 
\begin{equation}\label{TakagiClassical}
\tau(x)=\sum_{n=0}^{\infty} d(x, \tfrac{1}{2^n}\mathbb{Z}), \quad x\in \mathbb{R}.
\end{equation}

Numerous generalizations of the Takagi function were introduced with the aim of extending its striking properties to broader families of functions. In the literature, these are referred to as \emph{generalized Takagi functions}. Most of them are defined on the real line, although generalized Takagi functions have recently been introduced on separable Hilbert spaces (see \cite{FGG}), and on metric spaces (see \cite{MaWo2025} and \cite{MaWo2026}).  Surveys \cite{AK} and \cite{L}, as well as the thesis \cite{LL}, provide extensive information on the Takagi function and its generalizations.  In this section, we introduce a new generalization of the Takagi defined on a metric space.

Let $\boldsymbol{\alpha}=(\alpha_n)_{n=0}^\infty$ and $\boldsymbol{\delta}=(\delta_n)_{n=0}^\infty$ be sequences of positive real numbers such that 
\begin{align}
	\label{equ:Takagi1}
	&\lim_{n\to\infty} \delta_n=0;\\
	\label{equ:Takagi2}
	& \sum_{n=0}^{\infty}\alpha_n\delta_n<\infty.
\end{align} 
Let $\boldsymbol{A}=(A_n)_{n=0}^\infty$ be a sequence of non-empty subsets of $X$ such that for every $n\in\IN_0$ the following conditions hold:
\begin{align}
	\label{equ:Takagi3}
	&A_n\text{ is a closed subset of }X;\\
        \label{equ:Takagi5}
       &A_n\subseteq A_{n+1}; \\
	\label{equ:Takagi4}
	&d(x,A_n)\le\delta_n\text{ for all }x\in X.
\end{align}  
Note that \eqref{equ:Takagi1} and \eqref{equ:Takagi4} imply 
\begin{equation*}
	\bigcup\boldsymbol{A}:=\bigcup_{n=0}^\infty A_n\, \text{ is a dense subset of }X.
\end{equation*}

The $\boldsymbol{\alpha}$-$\boldsymbol{A}$-\emph{Takagi function} $T=T_{\boldsymbol{\alpha},\boldsymbol{A}}:X\to[0,\infty)$  is defined by
\begin{equation*}
	T(x)=T_{\boldsymbol{\alpha},\boldsymbol{A}}(x)=\sum_{n=0}^{\infty}\alpha_n\varphi_n(x),
\end{equation*}
where $\varphi_n(x)=d(x,A_n)$. Note that, for the classical Takagi function defined in \eqref{TakagiClassical}, we have
$$
A_n = \tfrac{1}{2^n}\mathbb{Z}, \qquad
\delta_n = \tfrac{1}{2^{n+1}}, \qquad
\text{and} \qquad \alpha_n = 1\, \text{}
$$
for every $n$.

We also denote 
\begin{equation*}
	S_n(x)=\sum_{k=0}^{n-1}\alpha_k\varphi_k(x)
	\ \ \ \text{and}\ \ \ 
	R_n(x)=\sum_{k=n}^{\infty}\alpha_k\varphi_k(x).
\end{equation*}
for any $n\in\IN_0$ and $x\in X$. Thus, $T(x)=S_n(x)+R_n(x)$. Note that  for any $n\in \mathbb{N}_0$ we have
\begin{align}\label{S_n}
&S_n \text{ is } L_n\text{-Lipschitz function with }L_n:=\sum_{k=0}^{n-1}\alpha_k;\\ \label{R_n}
&\varphi_n(x)=R_n(x)=0 \quad \text{ and } \quad T(x)=S_n(x) \text{ for any } x\in A_n.
\end{align} 

\begin{proposition}\label{prop:Local_Lipschitzness_of_T}
	The $\boldsymbol{\alpha}$-$\boldsymbol{A}$-Takagi function is locally Lipschitz on the set $G=\bigcup_{n=0}^\infty \Int A_n$.
\end{proposition}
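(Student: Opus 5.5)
Fix $x\in G$. Then $x\in\Int A_m$ for some $m\in\IN_0$, so there is $r>0$ with $B(x,r)\subseteq A_m$. By the monotonicity condition \eqref{equ:Takagi5}, $A_m\subseteq A_k$ for every $k\ge m$. Hence $B(x,r)\subseteq A_k$ for all $k\ge m$. It follows that $\varphi_k(u)=d(u,A_k)=0$ for every $u\in B(x,r)$ and every $k\ge m$, so $R_m\equiv 0$ on $B(x,r)$. This is exactly the content of \eqref{R_n} applied on the whole ball $B(x,r)\subseteq A_m$. Consequently $T=S_m$ on $B(x,r)$.

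Now invoke \eqref{S_n}: $S_m$ is $L_m$-Lipschitz with $L_m=\sum_{k=0}^{m-1}\alpha_k<\infty$. Each $\varphi_k$ is $1$-Lipschitz as a distance function, so a finite sum with positive weights $\alpha_k$ is $L_m$-Lipschitz. Therefore, for all distinct $u,v\in B(x,r)$,
\[
\frac{|T(u)-T(v)|}{d(u,v)}=\frac{|S_m(u)-S_m(v)|}{d(u,v)}\le L_m .
\]
Taking the supremum over such pairs and then the infimum over radii in the definition of $\ILip$ gives $\ILip T(x)\le L_m<\infty$. So $T$ is locally Lipschitz at $x$, and since $x\in G$ was arbitrary, $T$ is locally Lipschitz on $G$.

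No step here is a real obstacle. The only point needing care is the use of the nestedness \eqref{equ:Takagi5}: it ensures that a ball inside $A_m$ also lies in every later $A_k$, so the whole tail $R_m$ vanishes on the ball, not merely $\varphi_m$. Convergence of the series defining $T$ (guaranteed by \eqref{equ:Takagi2} and \eqref{equ:Takagi4}) is not needed beyond $T$ being well defined, because on the ball the tail is identically zero.
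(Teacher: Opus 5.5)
Your proof is correct and follows essentially the same route as the paper: on a neighborhood of $x$ contained in $A_m$ the tail $R_m$ vanishes (via \eqref{R_n}, which rests on the nestedness you make explicit), so $T=S_m$ there and $\ILip T(x)\le L_m<\infty$ by \eqref{S_n}.
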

\begin{proof}
	Consider some point $x\in G$. Then there exists $n\in\IN_0$ such that $x\in\Int A_n$. Since $R_n(u)=0$ for any $u\in A_n$, we conclude that $T(u)=S_n(u)$ on $A_n$. Thus, $\ILip T(x)=\ILip S_n(x)\le L_n<\infty$. 
\end{proof}

\begin{proposition}\label{prop:Pointwise_Lipschitzness_of_T}
	Let $n\in \IN_0$, $x\in A_n$, and $U$ be a neighborhood of $x$ such that 
	\begin{equation*}
		d(u, A_k)\le \delta_kd(u,x)\text{ for any  } k\ge n\text{ and }u\in U.
	\end{equation*}
	Then, $T$ is pointwise Lipschitz at $x$.
\end{proposition}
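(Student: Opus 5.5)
We split $T$ at level $n$ as $T=S_n+R_n$ and estimate the difference quotient of each piece at $x$ separately.

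\textbf{The head $S_n$.} By \eqref{S_n}, $S_n$ is $L_n$-Lipschitz, so
\[
|S_n(u)-S_n(x)|\le L_n\, d(u,x)
\]
for every $u\in X$.

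\textbf{The tail $R_n$.} Since $x\in A_n$ and the sets are nested by \eqref{equ:Takagi5}, we have $x\in A_k$ for all $k\ge n$. Hence $\varphi_k(x)=0$ for $k\ge n$, and so $R_n(x)=0$ by \eqref{R_n}. For $u\in U$ the hypothesis gives $\varphi_k(u)=d(u,A_k)\le\delta_k\, d(u,x)$ for every $k\ge n$. Therefore
\[
|R_n(u)-R_n(x)|=R_n(u)=\sum_{k=n}^\infty\alpha_k\varphi_k(u)\le \Big(\sum_{k=n}^\infty\alpha_k\delta_k\Big)\, d(u,x).
\]
The series on the right converges by \eqref{equ:Takagi2}.

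\textbf{Conclusion.} For $u\in U\setminus\{x\}$ we obtain
\[
\frac{|T(u)-T(x)|}{d(u,x)}\le L_n+\sum_{k=n}^\infty\alpha_k\delta_k=:C<\infty.
\]
Taking $\limsup_{u\to x}$ and using the characterization $\Lip T(x)=\limsup_{u\to x}|T(u)-T(x)|/d(u,x)$ from Section~\ref{sec:Preliminaries}, we get $\Lip T(x)\le C<\infty$. This is exactly the statement that $T$ is pointwise Lipschitz at $x$.

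There is no real obstacle here. The only points needing care are that $x$ lies in every $A_k$ with $k\ge n$, which is what makes the tail vanish at $x$, and that the summability condition \eqref{equ:Takagi2} turns the termwise bounds into a finite constant.
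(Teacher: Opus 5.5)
Your proposal is correct and follows essentially the same argument as the paper. The paper likewise splits $T=S_n+R_n$, uses $R_n(x)=0$ together with the hypothesis and \eqref{equ:Takagi2} to bound $R_n(u)$ by $L\,d(u,x)$ on $U$ (with $L=\sum_{k=0}^\infty\alpha_k\delta_k$ rather than your tail sum), and then concludes from the Lipschitzness of $S_n$.
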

\begin{proof}
It follows from \eqref{equ:Takagi2} that $L:=\sum\limits_{k=0}^\infty\alpha_k\delta_k<\infty$. Since $R_n(x)=0$, we have 
	\begin{equation*}
		\big|R_n(u)-R_n(x)\big|=R_n(u)=\sum_{k=n}^\infty\alpha_kd(u,A_k)
		\le\sum_{k=n}^\infty\alpha_k\delta_kd(u,x)\le L\,d(u,x)
	\end{equation*}
 for every $u\in U$.	Therefore, $R_n$ is pointwise Lipschitz at $x$. Since $S_n$ is Lipschitz, it follows that $T=S_n+R_n$ is also pointwise Lipschitz at $x$.
\end{proof}

\section{Non-pointwise Lipschitzness of  $T$  outside of $\bigcup\boldsymbol{A}$}

\begin{proposition}\label{prop:Lip_infinity_outside_of_A}
Let us suppose that $x\in X\setminus \bigcup\boldsymbol{A}$ and 
\begin{equation}\label{equ:Lip_infinity_outside_of_A}
	\limsup_{n\to\infty}\Big(\alpha_n-\sum_{k=0}^{n-1}\alpha_k\Big)=\infty.
\end{equation}	
Then, $\Lip T(x)=\infty$. In particular, if $a>2$ and $\alpha_n=a^n$ for every $\mathbb{N}_0$, then \eqref{equ:Lip_infinity_outside_of_A} holds.
\end{proposition}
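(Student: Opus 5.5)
Fix $x\in X\setminus\bigcup\boldsymbol{A}$. For every $n$ we have $d(x,A_n)>0$, because $A_n$ is closed and $x\notin A_n$. By \eqref{equ:Takagi4}, $d(x,A_n)\le\delta_n\to0$. The plan is to choose, for each $n$, a point $u_n\in A_n$ with $d(x,u_n)$ comparable to $d(x,A_n)$, and then show that the difference quotient of $T$ between $x$ and $u_n$ is at least $\alpha_n-L_n$ minus a bounded term. Since $u_n\to x$, condition \eqref{equ:Lip_infinity_outside_of_A} then forces $\Lip T(x)=\infty$.

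\textbf{Choice of $u_n$.} Set $r_n:=d(x,A_n)$ and pick $u_n\in A_n$ with $d(x,u_n)\le 2r_n$. Since $r_n\le\delta_n\to0$, we have $u_n\to x$ and $u_n\neq x$.

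\textbf{Splitting the increment.} Write $T=S_n+\alpha_n\varphi_n+R_{n+1}$.
\begin{itemize}
\item By \eqref{S_n}, $|S_n(x)-S_n(u_n)|\le L_n\,d(x,u_n)$.
\item Since $u_n\in A_n$, we have $\varphi_n(u_n)=0$, so $\alpha_n(\varphi_n(x)-\varphi_n(u_n))=\alpha_n r_n\ge \tfrac{\alpha_n}{2}d(x,u_n)$.
\item By \eqref{equ:Takagi5}, $u_n\in A_k$ for all $k\ge n$, so $\varphi_k(u_n)=0$ and $R_{n+1}(u_n)=0$. Moreover every term of $R_{n+1}(x)$ is nonnegative.
\end{itemize}
Combining these,
$$T(x)-T(u_n)\ge \alpha_n r_n - L_n d(x,u_n)\ge \Big(\tfrac{\alpha_n}{2}-L_n\Big)d(x,u_n).$$
The factor $\tfrac12$ is the obstacle: it only gives $\tfrac{\alpha_n}{2}-L_n$, not $\alpha_n-L_n$.

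\textbf{Removing the factor $\tfrac12$.} Replace $2$ by $1+\varepsilon_n$ with $\varepsilon_n\to0$, choosing $\varepsilon_n$ so small that $\varepsilon_n\alpha_n\le1$. Then
$$\alpha_n r_n\ge\frac{\alpha_n}{1+\varepsilon_n}\,d(x,u_n)\ge(\alpha_n-\varepsilon_n\alpha_n)\,d(x,u_n)\ge(\alpha_n-1)\,d(x,u_n),$$
so the quotient $\frac{|T(x)-T(u_n)|}{d(x,u_n)}$ is at least $\alpha_n-L_n-1$. Taking the $\limsup$ along the subsequence where $\alpha_n-L_n\to\infty$, and using $u_n\to x$, the formula $\Lip T(x)=\limsup_{u\to x}\frac{|T(u)-T(x)|}{d(u,x)}$ gives $\Lip T(x)=\infty$.

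\textbf{The example $\alpha_n=a^n$.} For $a>2$,
$$\alpha_n-\sum_{k<n}\alpha_k=a^n-\frac{a^n-1}{a-1}=a^n\,\frac{a-2}{a-1}+\frac{1}{a-1}\to\infty.$$
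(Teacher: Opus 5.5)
Your proof is correct and follows essentially the same route as the paper. The paper likewise picks a near-optimal point $u_n\in A_n$, using slack factor $1/\lambda_n$ with $\lambda_n=1-\alpha_n^{-2}$ where you use $1+\varepsilon_n$ with $\varepsilon_n\alpha_n\le 1$, and then uses $R_n(u_n)=0$, $R_n(x)\ge\alpha_n\varphi_n(x)$ and the $L_n$-Lipschitz bound on $S_n$ to obtain a difference quotient of at least $\alpha_n-L_n-O(1)$ along the subsequence where $\alpha_n-L_n\to\infty$; the computation for $\alpha_n=a^n$ matches too.
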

\begin{proof}
Recall  that $L_n=\sum\limits_{k<n}\alpha_k$ for any $n\in\IN_0$. Choose  an infinite set $M\subseteq\IN_0$ such that
\begin{equation*}\label{equ:Lip_infinity_outside_of_A(no1)}
	\gamma_n:=\alpha_n-L_n\to\infty\text{ as }n\to \infty, n\in M
\end{equation*}	
and $\gamma_n>1$ for $n\in M$. Since $\alpha_n\ge \gamma_n$, we have that $\alpha_n\to \infty$ as $n\to \infty$, $n\in M$. Set $\lambda_n=1-\frac{1}{\alpha_n^2}$. For any $n\in M$ there exists $u_n\in A_n$ such that 
$$
r_n:=d(x,u_n)<\tfrac{1}{\lambda_n}d(x,A_n)=\tfrac{1}{\lambda_n}\varphi_n(x).
$$ 
Therefore,
\begin{equation}\label{eq1}
	\varphi_n(x)>\lambda_nr_n\text{ for any }n\in M.
\end{equation}
By \eqref{equ:Takagi4} we have that $r_n\le\frac{\delta_n}{\lambda_n}$. As $\lambda_n\to 1$ it follows from \eqref{equ:Takagi1} that $r_n\to 0$, and hence  $u_n\to x$ as $n\to \infty$, $n\in M$. Since $u_n\in A_n$, we obtain by \eqref{R_n} that $R_n(u_n)=0$. Therefore, using \eqref{S_n} and \eqref{eq1},  for any $n\in M$ we have
\begin{align*}
	\big|T(u_n)-T(x)\big|&\ge \big|R_n(u_n)-R_n(x)\big|-\big|S_n(u_n)-S_n(x)\big|\\
	&\ge R_n(x)-L_nr_n\ge \alpha_n\varphi_n(x)-L_nr_n\\
	&\ge \big(\lambda_n\alpha_n-L_n\big)r_n
	=\Big(\big(1-\tfrac{1}{\alpha_n^2}\big)\alpha_n-L_n\Big)r_n\\
	&=\big(\gamma_n-\tfrac{1}{\alpha_n}\big)r_n.
\end{align*}
Thus, we conclude
\begin{equation*}
	\Lip T(x)\ge\limsup\limits_{\vspace{2pt}\substack{\vspace{1pt}n\to \infty\\ \vspace{1pt} n\in M}}\frac{\big|T(u_n)-T(x)\big|}{r_n}\ge\lim\limits_{\vspace{2pt}\substack{\vspace{1pt}n\to \infty\\ \vspace{1pt} n\in M}}\big(\gamma_n-\tfrac{1}{\alpha_n}\big)=\infty.
\end{equation*}

Finally,  suppose that $a>2$ and $\alpha_n=a^n$. Then, for any $n$
\begin{equation*}
	\gamma_n=a^n-\sum_{k=0}^{n-1}a^k=a^n-\frac{a^n-1}{a-1}\ge a^n-\frac{a^n}{a-1}=a^n\,\frac{a-2}{a-1}.
\end{equation*}
Hence, $\gamma_n\to\infty$ as $n\to \infty$ and \eqref{equ:Lip_infinity_outside_of_A} follows immediately.
\end{proof}

\section{Porosity and the big Lipschitz derivative of the distance function}

We begin this section by recalling the definition of porosity (see \cite{Za}, for instance). Let $E\subseteq X$ be a set and $x\in X$. The \textit{porosity}  of $E$ at $x\in X$  is defined by
\begin{equation*}
	p(E,x)=2\limsup_{\varepsilon\to0^+}\frac{p(E,x,\varepsilon)}{\varepsilon},
\end{equation*}	
where
\begin{equation*}
	p(E,x,\varepsilon)=\sup\left\{r\ge 0\colon \mbox{ there exists } u\in X\mbox{ such that } B(u,r)\subseteq B(x,\varepsilon)\setminus E\right\}.
\end{equation*}
The set $E$ is called \textit{porous} if there is $\lambda>0$ such that $p(E,x)\ge\lambda$ for any $x\in X$. Obviously, we have that $p(E,x)\ge 2$ for any $x\notin\overline{E}$. Hence, $E$ is porous if and only if 
\begin{equation*}
	p(E):=\inf_{x\in \overline{E}}p(E,x)>0.
\end{equation*}

Let us define for any $x\in\overline{E}$
\begin{equation*}
	\widetilde{p}(E,x)=\Lip d_E(x)
		\quad \text{and}\quad \widetilde{p}(E)=\inf_{x\in \overline{E}}\tilde{p}(E,x).
\end{equation*}
For convenience, we define $\widetilde{p}(E,x)=1$ for every $x\notin\overline{E}$. Since $d_E$ is $1$-Lipschitz, we have 
\begin{equation*}
	0\le \widetilde{p}(E,x)\le 1\text{ for any }x\in X. 
\end{equation*}

\begin{proposition}
Let $E\subseteq X$ and $x\in \overline{E}$. Then, 
$$
\tfrac{1}{2}p(E,x)\le \widetilde{p}(E,x)\le p(E,x).
$$
Consequently,
$
\tfrac12p(E)\le\widetilde{p}(E)\le p(E)
$	
and $E$ is porous if and only if $\tilde{p}(E)>0$.
\end{proposition}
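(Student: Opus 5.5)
Since $x\in\overline{E}$, we have $d_E(x)=0$, so
$$
\widetilde{p}(E,x)=\Lip d_E(x)=\limsup_{r\to0^+}\frac{\sup_{u\in B(x,r)}d_E(u)}{r}.
$$
The plan is to compare $s(\varepsilon):=\sup_{u\in B(x,\varepsilon)}d_E(u)$ with $p(E,x,\varepsilon)$ by two elementary geometric inclusions. Dividing by $\varepsilon$ and taking $\limsup$ then gives both inequalities.

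\emph{Upper bound, $\widetilde{p}(E,x)\le p(E,x)$.} Fix $\varepsilon>0$, take $u\in B(x,\varepsilon)$, and set $\rho=d_E(u)$. Then $B(u,\rho)\cap E=\varnothing$. Since $x\in\overline{E}$, we get $\rho\le d(u,x)<\varepsilon$. To place a ball inside $B(x,\varepsilon)\setminus E$, shrink the radius: the ball $B(u,t)$ with $t=\min\{\rho,\varepsilon-d(u,x)\}$ lies in $B(x,\varepsilon)\setminus E$. This is lossy when $u$ is near the boundary, so instead I would compare at a doubled scale. For $u\in B(x,\varepsilon)$ we have $B(u,\rho)\subseteq B(x,\varepsilon+\rho)\subseteq B(x,2\varepsilon)$, hence $p(E,x,2\varepsilon)\ge s(\varepsilon)$. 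Therefore
$$
\frac{s(\varepsilon)}{\varepsilon}\le 2\,\frac{p(E,x,2\varepsilon)}{2\varepsilon},
$$
and taking $\limsup$ gives $\widetilde{p}(E,x)\le 2\limsup_{\eta\to0^+} p(E,x,\eta)/\eta=p(E,x)$.

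\emph{Lower bound, $\tfrac12 p(E,x)\le\widetilde{p}(E,x)$.} If $B(u,r)\subseteq B(x,\varepsilon)\setminus E$ with $r>0$, then $u\in B(x,\varepsilon)$ and $d_E(u)\ge r$, so $s(\varepsilon)\ge r$. Taking the supremum over such $r$ (the case $r=0$ is trivial) gives $s(\varepsilon)\ge p(E,x,\varepsilon)$. Hence
$$
\widetilde{p}(E,x)\ge\limsup_{\varepsilon\to0^+} \frac{p(E,x,\varepsilon)}{\varepsilon}=\tfrac12 p(E,x).
$$

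For the consequences, take the infimum over $x\in\overline{E}$ in both inequalities; this yields $\tfrac12 p(E)\le \widetilde{p}(E)\le p(E)$. Porosity is equivalent to $p(E)>0$, as noted before the statement, and this holds if and only if $\widetilde{p}(E)>0$.

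The only delicate point is the upper bound. The maximal ball around $u$ avoiding $E$ need not fit inside $B(x,\varepsilon)$, and the fix is the scale change $\varepsilon\mapsto2\varepsilon$. This step is exactly why the factor $2$ in the definition of $p(E,x)$ makes the upper constant equal to $1$.
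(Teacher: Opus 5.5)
Your proof is correct and rests on the same two geometric observations as the paper. First, a hole $B(u,r)\subseteq B(x,\varepsilon)\setminus E$ forces $d_E(u)\ge r$ with $u\in B(x,\varepsilon)$. Second, the ball $B(u,d_E(u))$ misses $E$ and fits inside the doubled ball $B(x,2\varepsilon)$ because $d_E(u)\le d(u,x)<\varepsilon$; the paper uses $\lambda<\widetilde{p}(E,x)\le 1$ for the same purpose.

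The difference is only in packaging. You obtain the sandwich $p(E,x,\varepsilon)\le\sup_{u\in B(x,\varepsilon)}d_E(u)\le p(E,x,2\varepsilon)$ for every $\varepsilon>0$ directly from the definition of $\Lip$. The paper instead argues with $\lambda$-approximations and sequences via $\widetilde{p}(E,x)=\limsup_{u\to x}d(u,E)/d(u,x)$, so your version is slightly more streamlined.
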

\begin{proof}
Let $x\in \overline{E}$. First, consider $\lambda<p(E,x)$. Hence, there exists a sequence of positive ${\varepsilon_n\to 0}$ such that $\lambda<\frac{2}{\varepsilon_n}p(E,x,\varepsilon_n)$ for any $n\in\IN$. Fix some $n\in\IN$. Since $p(E,x,\varepsilon_n)>\frac{\lambda\varepsilon_n}{2}$, there are $r_n>\frac{\lambda\varepsilon_n}{2}$ and $u_n\in X$ such that $B(u_n,r_n)\subseteq B(x,\varepsilon_n)\setminus E$. Consequently, $d(u_n,E)\ge r_n>\frac{\lambda\varepsilon_n}{2}$ and $d(u_n,x)<\varepsilon_n$. Thus,
\begin{equation*}
	\widetilde{p}(E,x)=\limsup_{u\to x}\tfrac{d(u,E)}{d(u,x)}\ge\limsup_{n\to\infty}\tfrac{d(u_n,x)}{d(u_n,x)}\ge \tfrac{\lambda}{2}.
\end{equation*}
Letting $\lambda\to p(E,x)$ we obtain that $\widetilde{p}(E,x)\ge\frac{1}{2}p(E,x)$.

Now, consider $\lambda<\widetilde{p}(E,x)$. Therefore, there is a sequence of points $u_n\ne x$ such that $r_n:=d(u_n,x)\to 0$ and $\frac{d(u_n,E)}{r_n}>\lambda$ for any $n\in\IN$.  Fix $n\in \IN$. Since $d(u_n,E)>\lambda r_n$ we have that $B(u_n,\lambda r_n)\cap E=\emptyset$. Set $\varepsilon_n=2r_n$. Recall that $\lambda<\widetilde{p}(E,x)\le 1$. Hence, $d(u_n,x)+\lambda r_n=(1+\lambda) r_n<2r_n=\varepsilon_n$, which implies that $B(u_n,\lambda r_n)\subseteq B(x,\varepsilon_n)\setminus E$. Therefore, $p(E,x,\varepsilon_n)\ge \lambda r_n=\frac12\lambda\varepsilon_n$ and we obtain 
\begin{equation*}
	p(E,x)=2\limsup_{\varepsilon\to 0^+}\frac{p(E,x,\varepsilon)}{\varepsilon}
	\ge2\limsup_{n\to\infty}\frac{p(E,x,\varepsilon_n)}{\varepsilon_n}\ge\lambda.
\end{equation*}
Finally, letting $\lambda\to \tilde{p}(E,x)$ we conclude that $p(E,x)\ge\widetilde{p}(E,x)$. The result follows immediately.
\end{proof}

\begin{proposition}\label{prop:Porosity_at_isolated_point=1}
If $x$ is an isolated point of $E\subseteq X$ and $\varepsilon>0$ is such that $B(x,2\varepsilon)\cap E=\{x\}$, then $d(u,E)=d(u,x)$ for any $u\in B(x,\varepsilon)$. In particular, 	$\tilde{p}(E,x)=1$.
\end{proposition}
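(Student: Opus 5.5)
The plan is to verify the distance identity directly from the triangle inequality, and then read off $\tilde{p}(E,x)=1$ from the formula $\widetilde{p}(E,x)=\Lip d_E(x)=\limsup_{u\to x}\frac{d(u,E)}{d(u,x)}$ recorded in Section~\ref{sec:Preliminaries}.

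First, fix $u\in B(x,\varepsilon)$. Since $x\in E$, we have $d(u,E)\le d(u,x)$. For the reverse inequality, take any $y\in E$ with $y\neq x$. The hypothesis $B(x,2\varepsilon)\cap E=\{x\}$ gives $d(x,y)\ge 2\varepsilon$. Hence
\[
d(u,y)\ge d(x,y)-d(u,x)\ge 2\varepsilon-d(u,x)>\varepsilon>d(u,x).
\]
So every point of $E$ is at distance at least $d(u,x)$ from $u$, and therefore $d(u,E)=\inf_{y\in E}d(u,y)=d(u,x)$. This is just a triangle inequality check and should pose no difficulty.

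Second, note that $x\in E\subseteq\overline{E}$, so $\widetilde{p}(E,x)$ is defined as $\Lip d_E(x)$. Because $x\in E$, we have $d_E(x)=0$, and thus
\[
\widetilde{p}(E,x)=\limsup_{u\to x}\frac{|d_E(u)-d_E(x)|}{d(u,x)}=\limsup_{u\to x}\frac{d(u,E)}{d(u,x)}.
\]
For $u\in B(x,\varepsilon)\setminus\{x\}$ the quotient equals $1$ by the first step. Since $X$ has no isolated points, such $u$ accumulate at $x$, so the $\limsup$ ranges over a nonempty set of values all equal to $1$, giving $\widetilde{p}(E,x)=1$.

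The only delicate point is this use of the standing assumption that $X$ has no isolated points. It guarantees that the $\limsup$ is not taken over an empty set, where the convention $\sup\varnothing=0$ would give the wrong value. Beyond this, I expect no real obstacle.
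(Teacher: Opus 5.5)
Your proof is correct and follows essentially the same route as the paper. The paper bounds $d(u,E\setminus\{x\})\ge d(x,E\setminus\{x\})-d(u,x)\ge\varepsilon>d(u,x)$ in one step instead of point by point, and then reads off $\tilde{p}(E,x)=1$ from the $\limsup$ formula exactly as you do; your remark that the absence of isolated points in $X$ keeps this $\limsup$ nonvacuous is a detail the paper leaves implicit.
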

\begin{proof} Set $E'=E\setminus\{x\}$. Note that $d(x,E')\ge 2\varepsilon$.  Consider $u\in B(x,\varepsilon)$.
We have $d(u,E')\ge d(x,E')-d(u,x)\ge 2\varepsilon-\varepsilon=\varepsilon>d(u,x)$. Therefore,
$d(u,E)=d(u,x)$. Thus, 
\begin{equation*}
	 \widetilde{p}(E,x)=\limsup_{u\to x}\tfrac{d(u,E)}{d(u,x)}
	= \limsup_{u\to x}\tfrac{d(u,x)}{d(u,x)}
	=1.\qedhere
\end{equation*}
\end{proof}

\section{Non-pointwise Lipschitzness of  $T$}

\begin{theorem}\label{thm:Lip_infinity}
Let $x\in X$ and 
\begin{equation}\label{equ:Lip_infinity}
	\limsup_{n\to\infty}\Big(\alpha_n\tilde{p}(A_n,x)-\sum_{k=0}^{n-1}\alpha_k\Big)=\infty.
\end{equation}
Then, $\Lip T(x)=\infty$. In particular, if $\lambda>0$, $a>1+\frac1\lambda$, $\alpha_n=a^n$ for every $n\in\IN_0$, and
$\tilde{p}(A_n,x)\ge\lambda>0 $ for infinitely many $n$, then \eqref{equ:Lip_infinity} holds. 
\end{theorem}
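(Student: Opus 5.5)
The plan mirrors the proof of Proposition~\ref{prop:Lip_infinity_outside_of_A}. The new feature is that $x$ may now lie in some $A_n$. For such $n$ we cannot use $R_n(x)$ directly, so instead we exploit porosity: there are points $u$ near $x$ with $d(u,A_n)$ comparable to $d(u,x)$. We then compare $T$ at two points, one of which lies in $A_n$.

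Set $p_n=\tilde p(A_n,x)$, $L_n=\sum_{k<n}\alpha_k$ and $\gamma_n=\alpha_np_n-L_n$. Choose an infinite set $M$ on which $\gamma_n\to\infty$. Note that $\alpha_np_n\ge\gamma_n$, so $\alpha_n\to\infty$ along $M$ and $p_n>0$ there.

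Fix $n\in M$ and a small $\eta>0$ (to be sent to $0$ relative to $\gamma_n$). We split into two cases.

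\emph{Case $x\notin\overline{A_n}=A_n$.} Then $p_n=1$ by convention. We argue exactly as in Proposition~\ref{prop:Lip_infinity_outside_of_A}: pick $u_n\in A_n$ with $r_n=d(x,u_n)<\varphi_n(x)/(1-\eta)$. Since $R_n(u_n)=0$, we get
\[
|T(u_n)-T(x)|\ge R_n(x)-L_nr_n\ge\big((1-\eta)\alpha_n-L_n\big)r_n .
\]

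\emph{Case $x\in A_n$.} By definition of $\tilde p=\Lip d_{A_n}(x)$, we can pick $u_n$ with $r_n=d(u_n,x)<\min(\delta_n,1/n)$ and $\varphi_n(u_n)>(p_n-\eta)r_n$. Now $R_n(x)=0$ and $R_n(u_n)\ge\alpha_n\varphi_n(u_n)$, since all terms are nonnegative. Hence
\[
|T(u_n)-T(x)|\ge R_n(u_n)-L_nr_n\ge\big(\alpha_n(p_n-\eta)-L_n\big)r_n .
\]

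In both cases
\[
|T(u_n)-T(x)|/r_n\ge\gamma_n-\eta\alpha_n .
\]
The key point is how to choose $\eta$. Taking $\eta=\eta_n=1/\alpha_n^2$ makes the error term $\eta_n\alpha_n=1/\alpha_n\to0$ along $M$.

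We also need $r_n\to0$. In the first case $r_n\le\delta_n/(1-\eta_n)\to0$ by \eqref{equ:Takagi4} and \eqref{equ:Takagi1}; in the second case we chose $r_n<1/n$. Then
\[
\Lip T(x)\ge\limsup_{n\in M}\big(\gamma_n-1/\alpha_n\big)=\infty .
\]
The only subtle point is remembering that $\Lip f(x)=\limsup_{u\to x}|f(u)-f(x)|/d(u,x)$. This formula ensures that the chosen sequence $u_n\to x$, with $u_n\neq x$, suffices.

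For the particular case, take $\alpha_n=a^n$. On the infinitely many $n$ with $p_n\ge\lambda$ we have
\[
\alpha_np_n-L_n\ge\lambda a^n-\frac{a^n-1}{a-1}\ge a^n\Big(\lambda-\frac1{a-1}\Big).
\]
Since $a>1+1/\lambda$, we have $1/(a-1)<\lambda$, so the bracket is positive and the expression tends to $\infty$. This gives \eqref{equ:Lip_infinity}. I expect no real obstacle; the only care needed is in handling the two cases uniformly, including the convention $\tilde p=1$ off $\overline{A_n}$.
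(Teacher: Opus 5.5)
Your proof is correct and uses the same estimates as the paper: the splitting $T=S_n+R_n$, the vanishing of $R_n$ on $A_n$, and the perturbation $\eta_n=1/\alpha_n^2$. The only difference is organizational. The paper splits on whether $x\in\bigcup\boldsymbol{A}$, citing Proposition~\ref{prop:Lip_infinity_outside_of_A} in one case and using monotonicity of $\boldsymbol{A}$ to get $x\in A_n$ for all large $n$ in the other, with an iterated limit; you split per $n$ and build a single diagonal sequence $u_n\to x$.
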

\begin{proof} If $x\in X\setminus \bigcup\boldsymbol{A}$ then $\widetilde{p}(A_n,x)=1$ for every $n\in\IN_0$. Therefore, \eqref{equ:Lip_infinity} implies \eqref{equ:Lip_infinity_outside_of_A}. Thus, $\Lip T(x)=\infty$ by Proposition~\ref{prop:Lip_infinity_outside_of_A}.   
	
Now, consider the case where $x\in \bigcup\boldsymbol{A}$.	
Since the sequence $\boldsymbol{A}$ is increasing, there exists $n_0\in \IN_0$ such that $x\in A_{n_0}\subseteq A_n$ for every $n\ge n_0$. Denote $\lambda_n=\tilde{p}(A_n,x)$ and observe that $0\le\lambda_n\le 1$. Recall that $L_n=\sum_{k<n}\alpha_k$ for every $n$. It follows from \eqref{equ:Lip_infinity} that there exists an infinite set $M\subseteq\IN_0\cap[n_0,\infty)$ such that
\begin{equation*}
	\gamma_n:=\alpha_n\lambda_n-L_n\to\infty\, \text{ as } n\to \infty, n\in M
\end{equation*}
and $\gamma_n>0$ for $n\in M$. We have 
\begin{equation*}
	\alpha_n\ge\alpha_n\lambda_n=\gamma_n+L_n\ge \gamma_n>0
\end{equation*}
for every $n\in M$. Hence, $\alpha_n\to\infty $ as $n\to \infty$, $n\in M$. Now, fix $n\in M$ and denote $\lambda_n'=\lambda_n-\frac{1}{\alpha_n^2}$. Since  $x\in A_n$, we have 
\begin{equation*}
	\lambda_n=\Lip d_{A_n}(x)=\limsup_{u\to x}\tfrac{d(u,A_n)}{d(u,x)}>\lambda_n'.
\end{equation*}
Therefore, there is a sequence of points $u_j\ne x$ such that $r_j:=d(u_j,x)\to 0$ and
$\frac{1}{r_j}d(u_j,A_n)>\lambda_n'$. It follows that $\varphi_n(u_j)=d(u_j,A_n)>\lambda_n'r_j$ for every $j\in\IN$. On the other hand, as $x\in A_n$, we have  $R_n(x)=0$ by \eqref{R_n}.

It follows from the above observations that
\begin{equation*}
	\left|R_n(u_j)-R_n(x)\right|=R_n(u_j)\ge \alpha_n\varphi_n(u_j)>\alpha_n\lambda_n'r_j
	=\left (\alpha_n\lambda_n-\tfrac{1}{\alpha_n}\right )r_j
\end{equation*}	
for every $j$. Therefore, \eqref{S_n} yields that 
\begin{align*}
	\left|T(u_j)-T(x)\right|&\ge \left|R_n(u_j)-R_n(x)\right|-\left|S_n(u_j)-S_n(x)\right|\\
	&\ge \left(\alpha_n\lambda_n-\tfrac{1}{\alpha_n}\right)r_j -L_nr_j
	= \left(\gamma_n-\tfrac{1}{\alpha_n}\right)r_j
\end{align*}
for every $j$. Hence,
\begin{equation*}
	\Lip T(x)\ge\limsup_{j\to\infty }\frac{\left |T(u_j)-T(x)\right|}{d(u_j,x)}
	\ge \gamma_n-\tfrac{1}{\alpha_n}
\end{equation*}
for every $n\in M$, which implies $\Lip T(x)\ge\lim\limits_{\vspace{2pt}\substack{\vspace{1pt}n\to \infty\\ \vspace{1pt} n\in M}}\big(\gamma_n-\tfrac{1}{\alpha_n}\big)=\infty$.

Finally, suppose that  $\lambda>0$, $a>1+\frac1\lambda$, $\alpha_n=a^n$ for every $n\in\IN_0$ and $\widetilde{p}(A_n,x)\ge\lambda>0 $ for infinitely many $n$. For each such $n$, we have
\begin{equation*}
	\gamma_n\ge \lambda a^n-\sum_{k=0}^{n-1}a^k=\lambda a^n-\frac{a^n-1}{a-1}\ge \lambda a^n-\frac{a^n}{a-1}=\lambda \,a^n\frac{a-\left(1+\frac{1}{\lambda}\right)}{a-1}\to \infty
\end{equation*}
and then \eqref{equ:Lip_infinity} holds.
\end{proof}

\section{Maximal separated sets with respect to a function}

Throughout this section, we fix a function $h\colon X\to [0,\infty)$. We say that a set $A\subseteq X$ is \textit{$h$-separated} if
\begin{equation}\label{equ:Definition_of_h-separated_set}
	d(x,y)\ge\min\big\{h(x),h(y)\big\}\text{ for any distinct points }x,y\in A.
\end{equation}
In the case where $h(x)=\varepsilon$ for any $x\in X$, $h$-separated sets are exactly $\varepsilon$-separated set.

A set $A\subseteq X$ is called \textit{maximal $h$-separated} if it is $h$-separated and for any $h$-separated set $A'$ in $X$ the condition $A\subseteq A'$ implies $A=A'$.

\begin{proposition}\label{prop:existing_of_maximal_h-separated_set}
	Let $B$ be an $h$-separated set in $X$. Then there exists a maximal $h$-separated set $A$ in $X$ such that $B\subseteq A$. 
\end{proposition}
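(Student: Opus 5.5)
We argue by Zorn's lemma applied to the family
\begin{equation*}
	\mathcal{F}=\{A'\subseteq X\colon B\subseteq A' \text{ and } A' \text{ is } h\text{-separated}\},
\end{equation*}
partially ordered by inclusion. The family $\mathcal{F}$ is non-empty, since $B\in\mathcal{F}$ by hypothesis.

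The main step is to check that every chain in $\mathcal{F}$ has an upper bound in $\mathcal{F}$. Let $\mathcal{C}\subseteq\mathcal{F}$ be a non-empty chain and put $C=\bigcup\mathcal{C}$. Clearly $B\subseteq C$. To see that $C$ is $h$-separated, take distinct points $x,y\in C$. Then $x\in A_1$ and $y\in A_2$ for some $A_1,A_2\in\mathcal{C}$. Since $\mathcal{C}$ is totally ordered, one of these sets contains the other, say $A_1\subseteq A_2$. Then $x,y\in A_2$, and since $A_2$ is $h$-separated, condition \eqref{equ:Definition_of_h-separated_set} gives $d(x,y)\ge\min\{h(x),h(y)\}$. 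Hence $C\in\mathcal{F}$, and $C$ is an upper bound of $\mathcal{C}$. The empty chain is bounded above by $B$.

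By Zorn's lemma, $\mathcal{F}$ has a maximal element $A$. It remains to show that $A$ is maximal $h$-separated in the sense of the definition, that is, among all $h$-separated subsets of $X$ and not only those containing $B$. Suppose $A'$ is $h$-separated and $A\subseteq A'$. Then $B\subseteq A\subseteq A'$, so $A'\in\mathcal{F}$, and maximality of $A$ in $\mathcal{F}$ forces $A'=A$.

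Nothing here is a genuine obstacle. The only point to watch is that $h$-separation is a pairwise condition, which is exactly why it passes to unions of chains.
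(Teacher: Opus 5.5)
Your proof is correct and is essentially the paper's argument. The paper observes that $h$-separatedness is of finite character and invokes the Teichmüller--Tukey lemma; you instead apply Zorn's lemma directly to the $h$-separated supersets of $B$, which amounts to proving that lemma for this case, and in both versions it is the pairwise nature of the condition that makes it pass to unions of chains.
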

\begin{proof}
Clearly, the property of being $h$-separated is of finite character: $C$ is $h$-separated if and only if every finite subset of $C$ is $h$-separated. Therefore, the result follows from the Teichmüller--Tukey lemma (see \cite{Eng}, for instance).	
\end{proof}

\begin{proposition}\label{prop:characterisation_of_maximal_h-separated_sets}
	Let $A$ be an $h$-separated set in $X$. Then  $A$ is maximal $h$-separated if and only if 
	\begin{equation}\label{equ:characterisation_of_maximal_h-separated_sets}
		X\setminus A=\big\{x\in X:d(x,y)<\min\big\{h(x),h(y)\big\}\text{ for some }y\in A\setminus\{x\}\big\}.
	\end{equation}
\end{proposition}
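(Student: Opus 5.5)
The plan is to prove the two inclusions in \eqref{equ:characterisation_of_maximal_h-separated_sets} separately. Write $D$ for the right-hand side. The inclusion $D\subseteq X\setminus A$ holds for every $h$-separated set, maximal or not. The reverse inclusion $X\setminus A\subseteq D$ will turn out to be equivalent to maximality.

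First, $D\subseteq X\setminus A$. Take $x\in D$, with a witness $y\in A\setminus\{x\}$ such that $d(x,y)<\min\{h(x),h(y)\}$. If $x$ belonged to $A$, then $x,y$ would be two distinct points of $A$ violating \eqref{equ:Definition_of_h-separated_set}. So $x\notin A$.

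Next, assume $A$ is maximal $h$-separated and take $x\in X\setminus A$. Set $A'=A\cup\{x\}$. Since $A'$ strictly contains $A$, maximality says $A'$ is not $h$-separated. Pairs of distinct points both lying in $A$ satisfy \eqref{equ:Definition_of_h-separated_set}, so the violating pair must involve $x$. Hence there is $y\in A$, $y\neq x$, with $d(x,y)<\min\{h(x),h(y)\}$, that is, $x\in D$.

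Conversely, assume $X\setminus A=D$ and let $A'\supseteq A$ be $h$-separated. We show $A'=A$. Suppose some $x\in A'\setminus A$ exists. Then $x\in X\setminus A=D$, so there is $y\in A\setminus\{x\}\subseteq A'$ with $d(x,y)<\min\{h(x),h(y)\}$. This contradicts the $h$-separation of $A'$, so $A'=A$ and $A$ is maximal.

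There is no genuine obstacle here. The only care needed is in the maximality direction: we must note that the failure of \eqref{equ:Definition_of_h-separated_set} in $A\cup\{x\}$ can only come from a pair containing $x$, because $A$ itself is $h$-separated. We must also keep $y\neq x$ throughout, which is automatic since $x\notin A$ while $y\in A$.
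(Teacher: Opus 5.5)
Your proposal is correct and follows essentially the same route as the paper: the inclusion $RH\subseteq X\setminus A$ uses only $h$-separation, $X\setminus A\subseteq RH$ comes from maximality applied to $A\cup\{x\}$, and the converse derives a contradiction from a point of $A'\setminus A$. The only difference is that you argue the middle step directly, where the paper argues by contradiction; the content is identical.
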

\begin{proof} Let $RH$ denotes the right-hand side of \eqref{equ:characterisation_of_maximal_h-separated_sets}. 
	
Suppose that $A$ is maximal $h$-separated. To prove $X\setminus A\subseteq RH$ we
consider some $x\in X\setminus A$. Suppose that  $x\notin RH$. Therefore, inequality \eqref{equ:Definition_of_h-separated_set} holds for any $y\in A$. This means that $A':=A\cup\{x\}$ is $h$-separated. Thus, the maximality of $A$ gives $A'=A$ and consequently, $x\in A$, which is impossible. Therefore, $X\setminus A\subseteq RH$. 

Conversely, consider $x\in RH$. Then there is $y\in A\setminus\{x\}$ such that $d(x,y)<\min\big \{h(x),h(y)\big \}.$ Hence, inequality \eqref{equ:Definition_of_h-separated_set} does not hold. Since $A$ is $h$-separated, we conclude that $x\in X\setminus A$. Thus, $RH\subseteq X\setminus A$.

Now, we suppose that $A$ is an $h$-separated set such that \eqref{equ:characterisation_of_maximal_h-separated_sets} holds. Let $A'$ be an $h$-separated set with $A\subseteq A'$. Assume that there exists $x\in A'\setminus A$. It follows from \eqref{equ:characterisation_of_maximal_h-separated_sets} that there is $y\in A\setminus\{x\}$ such that $d(x,y)<\min\big\{h(x),h(y)\big\}$. Therefore, inequality   \eqref{equ:Definition_of_h-separated_set} does not hold for such distinct points $x,y\in A'$. This contradics the fact that $A'$ is $h$-separated. Thus, $A'=A$ and hence, $A$ is maximal. 
\end{proof}

\begin{proposition}\label{prop:maximal_h-separated}
	Let $A$ be a maximal $h$-separated set in $X$, $F=h^{-1}(0)$, and $G=X\setminus F$. Then the following statements hold:
	\begin{enumerate}[label=$(\roman*)$]
		\item\label{prop:maximal_h-separated(item1)} $F\subseteq A$;
		\item\label{prop:maximal_h-separated(item2)} $d(x,A)\le h(x)$ for any $x\in X$.
\end{enumerate}
Moreover, in the case where $h$ is lower semicontinuous we have that
\begin{enumerate}[label=$(\roman*)$,start=3]		
		\item\label{prop:maximal_h-separated(item3)} $A$ is closed;
		\item\label{prop:maximal_h-separated(item4)} if  $x\in G\cap A$, then $x$ is an isolated point of $A$; 
		\item\label{prop:maximal_h-separated(item5)} if  $x\in F\cap\overline{G}$ and $h$ continuous at $x$, then $x\in\overline{G\cap A}$. 
	\end{enumerate}
\end{proposition}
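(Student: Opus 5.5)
The plan is to derive all five items from the characterisation in Proposition~\ref{prop:characterisation_of_maximal_h-separated_sets}. Items (i) and (ii) need no semicontinuity, so I handle them first.

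For (i), take $x\in F$, so $h(x)=0$. Then $\min\{h(x),h(y)\}=0$ for every $y$, and the strict inequality $d(x,y)<0$ is impossible. Hence $x$ does not lie in the right-hand side of \eqref{equ:characterisation_of_maximal_h-separated_sets}, which forces $x\in A$.

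For (ii), if $x\in A$ then $d(x,A)=0\le h(x)$. If $x\notin A$, the characterisation gives $y\in A$ with $d(x,y)<\min\{h(x),h(y)\}\le h(x)$, so $d(x,A)<h(x)$.

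Now assume $h$ is lower semicontinuous and prove (iv) before (iii). Let $x\in G\cap A$, so $h(x)>0$. By lower semicontinuity there is $\rho\in(0,h(x)/2)$ such that $h(y)>h(x)/2$ for all $y\in B(x,\rho)$. For any $y\in A\cap B(x,\rho)$ with $y\ne x$, separation gives $d(x,y)\ge\min\{h(x),h(y)\}>h(x)/2>\rho$, which contradicts $y\in B(x,\rho)$. So $B(x,\rho)\cap A=\{x\}$ and $x$ is isolated in $A$.

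For (iii), let $x\in\overline{A}$ and suppose $x\notin A$. By (i), $x\in G$, so $h(x)>0$. Choose $\rho$ as above, with $h>h(x)/2$ on $B(x,\rho)$ and $\rho<h(x)/4$. Since $x\in\overline{A}\setminus A$, the ball $B(x,\rho)$ contains infinitely many points of $A$, in particular two distinct ones $y,z$. Then $d(y,z)<2\rho<h(x)/2<\min\{h(y),h(z)\}$, which contradicts $h$-separation. Hence $A$ is closed.

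For (v), let $x\in F\cap\overline{G}$ with $h$ continuous at $x$, and fix $\varepsilon>0$. Continuity and $h(x)=0$ give $\eta\in(0,\varepsilon/2)$ with $h<\varepsilon/2$ on $B(x,\eta)$. Pick $g\in G\cap B(x,\eta)$.
\begin{itemize}
\item If $g\in A$, then $g\in G\cap A\cap B(x,\varepsilon)$ and we are done.
\item Otherwise the characterisation gives $y\in A$ with $d(g,y)<\min\{h(g),h(y)\}$. Then $h(y)>0$, so $y\in G$, and $d(g,y)<h(g)<\varepsilon/2$.
\end{itemize}
In the second case $d(x,y)<\eta+\varepsilon/2<\varepsilon$, so again $y\in G\cap A\cap B(x,\varepsilon)$. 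Since $\varepsilon$ was arbitrary, $x\in\overline{G\cap A}$.

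The main obstacle is (v). The key point is that the witness $y$ produced by the characterisation automatically satisfies $h(y)>0$, and continuity at $x$ keeps both $g$ and $y$ close to $x$. The closedness argument in (iii) is the other place where some care with the radii is needed.
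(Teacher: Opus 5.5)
Your proof is correct. Items (i), (ii), (iv) and (v) follow the paper's argument essentially verbatim: your (i) is the contrapositive of the paper's, and your (v) uses a fixed $\varepsilon$-ball where the paper uses a sequence $x_n\to x$, which changes nothing.

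The genuine difference is in (iii). The paper shows directly that $X\setminus A$ is open, using maximality. For $x\notin A$ it takes the witness $y\in A$ with $d(x,y)<\min\{h(x),h(y)\}$. It then notes that $u\mapsto\min\{h(u),h(y)\}-d(u,y)$ is lower semicontinuous and positive at $x$. So the same witness $y$ serves every $u$ near $x$, once the neighborhood is shrunk to exclude $y$.

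You instead use (i) to place $\overline{A}\setminus A$ inside $G$. You then show that lower semicontinuity forbids two distinct points of $A$ close to any $x\in G$. Your route uses maximality only through (i). It therefore proves more: any $h$-separated set containing $h^{-1}(0)$ is closed when $h$ is lower semicontinuous. It also shows that (iii) and (iv) come from one observation, namely that every point of $G$ has a neighborhood meeting $A$ in at most one point.

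The paper's route does not depend on (i) and works pointwise with the maximality witness. Its only cost is the small extra care needed to keep $y$ out of the neighborhood.
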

\begin{proof}
	\ref{prop:maximal_h-separated(item1)}. Consider $x\notin A$. By Proposition \ref{prop:characterisation_of_maximal_h-separated_sets},  there exists $y\in A\setminus\{x\}$ such that $d(x,y)<\min\big \{h(x),h(y)\big \}$. Therefore, $h(x)>0$ and we have $X\setminus A\subseteq X\setminus h^{-1}(0)=X\setminus F$, which yields \ref{prop:maximal_h-separated(item1)}.
	
	\ref{prop:maximal_h-separated(item2)}. Fix $x\in X$. If $x\in A$ then $d(x,A)=0\le h(x)$. Let us suppose that $x\in X\setminus A$.  By Proposition \ref{prop:characterisation_of_maximal_h-separated_sets} again, there exists $y\in A$ such that 
	$d(x,y)<\min\big\{h(x),h(y)\big\}$. Thus, $d(x,A)\le d(x,y)<h(x)$.
	
	Now, we suppose that $h$ is lower semicontinuous.
	
	\ref{prop:maximal_h-separated(item3)}. Let us prove that $X\setminus A$ is open. Consider $x\in X\setminus A$. It follows from Proposition~\ref{prop:characterisation_of_maximal_h-separated_sets} that there exists $y\in A\setminus\{x\}$ such that 
	$d(x,y)<\min\big\{h(x),h(y)\big\}$. Define $f:X\to\IR$ by
	\begin{equation}\label{equ:maximal_h-separated(function_f)}
		f(u)=\min\big\{h(u),h(y)\big\}-d(u,y),\ \ \ u\in X.
	\end{equation}
	Clearly,  $f$ is lower semicontinuous and $f(x)>0$. Consequently, there is a neighborhood $U$ of $x$ such that $y\notin U$ and $f(u)>0$ on $U$. Hence, $d(u,y)<\min\big\{h(u),h(y)\big\}$ for any $u\in U$. Therefore, $U\subseteq X\setminus A$ by  Proposition \ref{prop:characterisation_of_maximal_h-separated_sets}. Thus, $X\setminus A$ is open.
	
	\ref{prop:maximal_h-separated(item4)}. Let $x\in A$ and $h(x)>0$. Set $\varepsilon=\frac{1}{2}h(x)$. By lower semicontinuity of $h$ there exists a neighborhood $U$ of $x$ such that $h(u)>\varepsilon$ on $U$ and $U\subseteq B(x,\varepsilon)$. Suppose that there exists $y\in U\cap A$ such that $y\ne x$. Since $A$ is $h$-separated and $y\in U\subseteq B(x,\varepsilon)$, we have $\varepsilon>d(x,y)\ge\min\big\{h(x),h(y)\big\}\ge\varepsilon$. This contradiction shows that $y=x$. Thus, $U\cap A=\{x\}$, and consequently, $x$ is an isolated point of $A$. 
	
	\ref{prop:maximal_h-separated(item5)}. Consider some point $x\in F\cap\overline{G}$ such that $h$ continuous at $x$. Therefore, $h(x)=0$ and there is a sequence of points $x_n\in G$ such that $x_n\to x$. Hence, $\lim\limits_{n\to\infty}h(x_n)=h(x)=0$. Let $M:=\big\{n\in\IN: x_n\in A\big\}$. If $M$ is infinite, then 
$$
x=\lim\limits_{\vspace{2pt}\substack{\vspace{1pt}n\to \infty\\ \vspace{1pt} n\in M}}x_n\in\overline{G\cap A}.
$$
	
Let us consider the case where $M$ is finite. Therefore, $N:=\IN\setminus M$ is infinite. Consider $n\in N$. Then $x_n\in X\setminus A$. By Proposition \ref{prop:characterisation_of_maximal_h-separated_sets} there exists $y_n\in A$ such that $d(x_n,y_n)<\min\big\{h(x_n), h(y_n)\big\}$. Therefore, $h(y_n)>0$, and hence, $y_n\in G\cap A$. On the other hand, we have $d(x_n,y_n)<h(x_n)$ and consequently, 
$$
d(x,y_n)\le d(x,x_n)+d(x_n,y_n)<d(x,x_n)+h(x_n)\to 0\text{ as }\, n\to \infty ,\, n\in N.
$$ 
Hence, we conclude
\[
x=\lim\limits_{\substack{\vspace{1pt}n\to \infty\\ \vspace{1pt}n\in N}}x_n\in\overline{G\cap A}. \qedhere
\]
\end{proof}
\section{Characterization of sets where the big Lipschitz derivative is infinite}

In this section, we construct a special sequence $\boldsymbol{A}=\boldsymbol{A}^{\boldsymbol{G}}$ that allows us to determine the sets $(\ILip T)^{-1}(\infty)$ and $(\Lip T)^{-1}(\infty)$.

Let us consider a decreasing sequence $\boldsymbol{G}=(G_n)_{n=0}^\infty$ of open sets $G_n$ in $X$ and denote 
\begin{equation}\label{equ:G_delta-pair}
	E=\bigcap_{n=0}^\infty G_n\ \ \text{ and }\ \ F=\bigcap_{n=0}^\infty \overline{G}_n.
\end{equation}
Consider $b>a>2$ and let
\begin{equation}\label{equ:alpha_and_delta}
	\alpha_n=a^n
	\ \ \ \text{and}\ \ \ 
	\delta_n=\frac{1}{b^n}
\end{equation}
for any $n\in\IN_0$. Denote $F_n=X\setminus G_n$  and define a function $h_n\colon X\to[0,\infty)$ by
\begin{equation*}
	h_n(x)=\delta_n \min\big\{1,d(x,F_n)\big\}
\end{equation*}
for any $x\in X$. It is immediate that $h_n$ is $\delta_n$-Lipschitz. 

Let us define inductively an increasing sequence of maximal $h_n$-separated sets $A_n$.  Consider $B=\{b\}$ where $b$ is an arbitraty point of $X$. By Proposition~\ref{prop:existing_of_maximal_h-separated_set} there exists a maximal $h_0$-separated set $A_0\supseteq B$. Suppose that for some $n\in \IN$ we already constructed a maximal $h_{n-1}$-separated set $A_{n-1}$. Since $\delta_n<\delta_{n-1}$ and $F_n\supseteq F_{n-1}$, we obtain that $h_n(x)\le h_{n-1}(x)$ for any $x\in X$. Therefore, $A_{n-1}$ is also $h_n$-separated. By Proposition~\ref{prop:existing_of_maximal_h-separated_set} there exists a maximal $h_n$-separated set $A_n\supseteq A_{n-1}$. Thus, the required sequence $\boldsymbol{A}^{\boldsymbol{G}}=(A_n)_{n=0}^\infty$ is defined. 

By construction we have \eqref{equ:Takagi5}. Since $b>a>2$, properties \eqref{equ:Takagi1} and \eqref{equ:Takagi2} hold. By Proposition~\ref{prop:maximal_h-separated}\ref{prop:maximal_h-separated(item3)}, property \eqref{equ:Takagi3} also holds. It follows from
Proposition~\ref{prop:maximal_h-separated}\ref{prop:maximal_h-separated(item2)} that $d(x,A_n)\le h_n(x)\le\delta_n$ for any $x\in X$ and $n\in\IN_0$. Therefore, \eqref{equ:Takagi4} holds. Thus, all the basic assumptions \eqref{equ:Takagi1}-\eqref{equ:Takagi4} are satisfied.

\begin{theorem}\label{thm:ILip_and_Lip_of_Takagi}
	Let $\boldsymbol{\alpha}$ and $\boldsymbol{A}=\boldsymbol{A}^{\boldsymbol{G}}$ be the sequences constructed above, $E$ and $F$ be the sets defined in \eqref{equ:G_delta-pair}, and $T=T_{\boldsymbol{\alpha},\boldsymbol{A}}$ be the $\boldsymbol{\alpha}$-$\boldsymbol{A}$-Takagi function. Then, we have  
$$
E=(\Lip T)^{-1}(\infty) \quad \text{ and } \quad \overline{E}\subseteq(\ILip T)^{-1}(\infty)\subseteq F.
$$
\end{theorem}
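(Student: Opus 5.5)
The plan is to prove four inclusions: $E\subseteq(\Lip T)^{-1}(\infty)$, $X\setminus E\subseteq X\setminus(\Lip T)^{-1}(\infty)$, $\overline E\subseteq(\ILip T)^{-1}(\infty)$ and $(\ILip T)^{-1}(\infty)\subseteq F$. Throughout, write $F_n=X\setminus G_n$ and $h_n$ as in the construction, and note $\delta_k=b^{-k}$ with $b>a$.

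\emph{Step 1: $E\subseteq(\Lip T)^{-1}(\infty)$.} Let $x\in E$. If $x\notin\bigcup\boldsymbol{A}$, Proposition~\ref{prop:Lip_infinity_outside_of_A} applies directly because $a>2$. Otherwise $x\in A_{n_0}\subseteq A_n$ for $n\ge n_0$. Since $x\in G_n$, we have $h_n(x)>0$, so Proposition~\ref{prop:maximal_h-separated}\ref{prop:maximal_h-separated(item4)} makes $x$ an isolated point of $A_n$. Proposition~\ref{prop:Porosity_at_isolated_point=1} then gives $\tilde p(A_n,x)=1$ for all $n\ge n_0$. Theorem~\ref{thm:Lip_infinity} with $\lambda=1$ and $a>2$ yields $\Lip T(x)=\infty$.

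\emph{Step 2: $\Lip T(x)<\infty$ for $x\notin E$.} Pick $n$ with $x\in F_n$. Then $h_k(x)=0$ for every $k\ge n$, because the $F_k$ increase, and hence $x\in A_k$ by Proposition~\ref{prop:maximal_h-separated}\ref{prop:maximal_h-separated(item1)}. I would like to apply Proposition~\ref{prop:Pointwise_Lipschitzness_of_T} with $U=B(x,1)$. Proposition~\ref{prop:maximal_h-separated}\ref{prop:maximal_h-separated(item2)} gives
\[
d(u,A_k)\le h_k(u)\le\delta_k d(u,F_k)\le\delta_k d(u,x),
\]
since $x\in F_k$. That is exactly the hypothesis needed, so $T$ is pointwise Lipschitz at $x$. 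Steps 1 and 2 together give $E=(\Lip T)^{-1}(\infty)$.

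\emph{Step 3: $\overline E\subseteq(\ILip T)^{-1}(\infty)$.} Since $\Lip T\le\ILip T$ pointwise, Step 1 gives $E\subseteq(\ILip T)^{-1}(\infty)$. The set $(\ILip T)^{-1}(\infty)$ is closed because $\ILip T$ is upper semicontinuous. Indeed, if $\ILip T(x)<c$, the sup over some ball $B(x,r)$ is less than $c$, and balls $B(y,r/2)\subseteq B(x,r)$ for $y$ near $x$. Hence $\overline E$ is contained in it.

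\emph{Step 4: $(\ILip T)^{-1}(\infty)\subseteq F$.} Let $x\notin F$. Then $x\notin\overline G_n$ for some $n$, so there is a ball $B(x,r)\subseteq F_n\subseteq F_k$ for all $k\ge n$. By Proposition~\ref{prop:maximal_h-separated}\ref{prop:maximal_h-separated(item1)}, $B(x,r)\subseteq A_n$, so $x\in\Int A_n$. Proposition~\ref{prop:Local_Lipschitzness_of_T} then gives $\ILip T(x)<\infty$.

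The only delicate point is Step 2, namely verifying the hypothesis of Proposition~\ref{prop:Pointwise_Lipschitzness_of_T}. It reduces cleanly to the estimate $h_k\le\delta_k d(\cdot,F_k)$ together with $x\in F_k$ for all $k\ge n$.
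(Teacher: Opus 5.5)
Your proposal is correct and follows the paper's proof essentially step by step. It proves the same four inclusions, uses the same estimate $d(u,A_k)\le h_k(u)\le\delta_k d(u,F_k)\le\delta_k d(u,x)$ off $E$, uses $F_n\subseteq A_n$ to get $X\setminus F\subseteq\bigcup_n\Int A_n$, and uses upper semicontinuity of $\ILip T$ for $\overline{E}$. The only cosmetic difference is in Step 1: you split off the case $x\notin\bigcup\boldsymbol{A}$, whereas the paper notes that $\tilde p(A_n,x)=1$ for every $x\in G_n$ and applies Theorem~\ref{thm:Lip_infinity} with $\lambda=1$ directly, since that theorem already handles this case internally.
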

\begin{proof}
It follows from Proposition~\ref{prop:Local_Lipschitzness_of_T} that $T$ is locally Lipschitz on $G=\bigcup_{n=0}^{\infty}\Int A_n$. 
Observe that $h_n^{-1}(0)=F_n$. Thus,  Proposition~\ref{prop:maximal_h-separated}\ref{prop:maximal_h-separated(item1)} 
implies that  $F_n\subseteq A_n$. Therefore, 
\begin{equation*}
	X\setminus F=X\setminus\bigcap_{n=0}^\infty \overline{G}_n
	=\bigcup_{n=0}^\infty \big(X\setminus\overline{G}_n\big)
	=\bigcup_{n=0}^\infty\Int F_n
	\subseteq \bigcup_{n=0}^\infty\Int A_n=G
\end{equation*}
and consequently, $T$ is locally Lipschitz on $X\setminus F$, that is $\ILip T(x)<\infty$ on  $X\setminus F$. Hence, $(\ILip T)^{-1}(\infty)\subseteq F$.

Now, consider some $x\in X\setminus E=\bigcup_{n=0}^\infty F_n$ and choose $n\in \IN_0$ such that $x\in F_n$. We have $x\in F_n\subseteq F_k$ for any $k\ge n$. It follows from Proposition \ref{prop:maximal_h-separated}~\ref{prop:maximal_h-separated(item2)} that for any $u\in X$ and $k\ge n$ we have
\begin{equation*}
	d(u,A_k)\le h_k(u)\le \delta_k d(u,F_k)\le\delta_k d(u,x). 
\end{equation*}
Thus, Proposition~\ref{prop:Pointwise_Lipschitzness_of_T} implies that $\Lip T(x)<\infty$.

Proposition~\ref{prop:maximal_h-separated}\ref{prop:maximal_h-separated(item4)}  guarantees that $A_n\setminus F_n$ is discrete. Therefore, Proposition~\ref{prop:Porosity_at_isolated_point=1} implies that $\tilde{p}(A_n,x)=1$ for any $x\in A_n\setminus F_n$. By definition, $\tilde{p}(A_n,x)=1$ on $X\setminus A_n$. Thus, $\tilde{p}(A_n,x)=1$ for any $x\in G_n=X\setminus F_n$. This implies that $\tilde{p}(A_n,x)=1$ for any $x\in E$ and $n\in\IN_0$. Hence, we can apply Theorem~\ref{thm:Lip_infinity} with $\lambda=1$ to conclude that $\Lip T(x)=\infty$ for any $x\in E$. Since $\ILip T\ge \Lip T$ and $\ILip T$ is upper semicontinuous, we conclude that $\ILip T(x)=\infty$ on $\overline{E}$. Thus, $\overline{E}\subseteq(\ILip T)^{-1}(\infty)$.
\end{proof}

We are now in a position to prove the main result of the paper.

\begin{theorem}\label{thm:L-problem}
	Let $X$ be a metric space without isolated points and $E\subseteq X$. Then the following conditions are equivalent:
	\begin{enumerate}[label=$(\roman*)$]
		\item\label{thm:L-problem(item1)}
		$E$ is a $G_\delta$-set in $X$;
		\item\label{thm:L-problem(item2)}
		there exists a continuous function $f\colon X\to\IR$ such that $(\Lip f)^{-1}(\infty)=E$;
		\item\label{thm:L-problem(item3)} 
		there exists a continuous function $f\colon X\to\IR$ such that $(\Lip f)^{-1}(\infty)=E$ and $(\ILip f)^{-1}(\infty)=\overline{E}$.
	\end{enumerate}
\end{theorem}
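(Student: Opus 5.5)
The plan is to prove the cycle of implications \ref{thm:L-problem(item1)}$\Rightarrow$\ref{thm:L-problem(item3)}$\Rightarrow$\ref{thm:L-problem(item2)}$\Rightarrow$\ref{thm:L-problem(item1)}.

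The implication \ref{thm:L-problem(item3)}$\Rightarrow$\ref{thm:L-problem(item2)} is trivial. The implication \ref{thm:L-problem(item2)}$\Rightarrow$\ref{thm:L-problem(item1)} follows at once from the cited fact (Corollary 5.6 of \cite{MaWo2026-1}) that $(\Lip f)^{-1}(\infty)$ is a $G_\delta$-set for every continuous $f\colon X\to\IR$.

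The substance lies in \ref{thm:L-problem(item1)}$\Rightarrow$\ref{thm:L-problem(item3)}, which I intend to obtain from Theorem~\ref{thm:ILip_and_Lip_of_Takagi}. That theorem yields
\[
\overline{E}\subseteq(\ILip T)^{-1}(\infty)\subseteq F=\bigcap_n\overline{G}_n .
\]
To get equality with $\overline{E}$, the sequence $\boldsymbol{G}$ must be chosen so that $F=\overline{E}$. So I start from a representation $E=\bigcap_n U_n$ with $U_n$ open. If $E=\varnothing$, a constant function works. Otherwise I set
\[
V_n=\{x\in X: d(x,E)<2^{-n}\}, \qquad G_n=U_0\cap\dots\cap U_n\cap V_n .
\]
The sets $G_n$ are open and decreasing, and $\bigcap_n G_n=E\cap\overline{E}=E$. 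Moreover $\overline{G}_n\subseteq\overline{V}_n\subseteq\{x: d(x,E)\le 2^{-n}\}$, so $F\subseteq\{x:d(x,E)=0\}=\overline{E}$. Conversely, $E\subseteq G_n$ gives $\overline{E}\subseteq\overline{G}_n$ for every $n$, hence $\overline{E}\subseteq F$. Therefore $F=\overline{E}$.

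With this $\boldsymbol{G}$ and any $b>a>2$, I take $f=T_{\boldsymbol{\alpha},\boldsymbol{A}^{\boldsymbol{G}}}$. Theorem~\ref{thm:ILip_and_Lip_of_Takagi} then gives $(\Lip f)^{-1}(\infty)=E$ and $\overline{E}\subseteq(\ILip f)^{-1}(\infty)\subseteq F=\overline{E}$. It remains to check that $f$ is continuous. The series $\sum\alpha_n\varphi_n$ has terms bounded by $\alpha_n\delta_n=(a/b)^n$, so it converges uniformly by the Weierstrass M-test, and each $\varphi_n$ is $1$-Lipschitz; hence $f$ is continuous.

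The main obstacle is the choice of $\boldsymbol{G}$ forcing $F=\overline{E}$. The shrinking neighbourhoods $V_n$ of $E$ handle this; everything else is already contained in the earlier results.
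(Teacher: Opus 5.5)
Your proposal is correct and follows essentially the same route as the paper: the same cycle of implications, the same citation for (ii)$\Rightarrow$(i), and the same device of intersecting the $G_\delta$ representation with shrinking neighbourhoods of $E$ so that $F=\overline{E}$ before invoking Theorem~\ref{thm:ILip_and_Lip_of_Takagi}. Your radii $2^{-n}$ (which avoid the paper's $\tfrac1n$ at $n=0$), the separate treatment of $E=\varnothing$, the proof of both inclusions for $F=\overline{E}$, and the explicit continuity check via the M-test are minor clean-ups of points the paper leaves implicit.
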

\begin{proof}
\ref{thm:L-problem(item3)}$\Rightarrow$\ref{thm:L-problem(item2)}. This implication is evident.

\ref{thm:L-problem(item2)}$\Rightarrow$\ref{thm:L-problem(item1)}. It follows from Corollary 5.6  of \cite{MaWo2026-1}  that $(\Lip f)^{-1}(\infty)$ is of type $G_\delta$. 

\ref{thm:L-problem(item1)}$\Rightarrow$\ref{thm:L-problem(item3)}. Since $E$ is a $G_\delta$-set, there is a sequence of open sets $G_n'$ such that $E=\bigcap_{n=0}^\infty G_n'$. Define the sequence $\boldsymbol{G}=(G_n)_{n=0}^\infty$ by
\begin{equation*}
	G_n=\Big\{x\in\bigcap_{k\le n}G_k'\colon d(x, E)<\tfrac1n\Big\},\ \ \ n\in \IN_0.
\end{equation*}
Clearly, $\boldsymbol{G}$ is decreasing and
$$
\bigcap_{n=1}^{\infty}\overline{G}_n\subseteq \bigcap_{n=1}^{\infty}\big \{x\in X :d(x, E)\leq \tfrac1n\big \}=\overline{E}.
$$
 Thus, \eqref{equ:G_delta-pair} holds with $F=\overline{E}$. 
Let $b>a>2$, $\boldsymbol{\alpha}$  and $\boldsymbol{\delta}$ by defined by \eqref{equ:alpha_and_delta}, $\boldsymbol{A}=\boldsymbol{A}^{\boldsymbol{G}}$, and $f=T_{\boldsymbol{\alpha},\boldsymbol{A}}$ be the $\boldsymbol{\alpha}$-$\boldsymbol{A}$-Takagi function. By Theorem~\ref{thm:ILip_and_Lip_of_Takagi} we obtain that $E=(\Lip f)^{-1}(\infty)$ and $F=\overline{E}=(\ILip f)^{-1}(\infty)$.
\end{proof}

We conclude the paper with the following open question:
\begin{problem}
Let $X$ be a metric space without isolated points. Characterize the pairs $(F,E)$, where $F$ is closed in $X$ and $E\subseteq F$ is a $G_\delta$-set in $X$, for which there exists a continuous function $f\colon X\to\IR$ such that $(\ILip f)^{-1}(\infty)=F$ and $(\Lip f)^{-1}(\infty)=E$.
\end{problem}

\section{Acknowledgments}
This paper was partially written while the authors were participating
in the VII and VIII Workshops on Postmodern Real Analysis in
Bedlewo, Poland. The authors would like to thank the organizers
of both workshops for the invitation and for funding their participation.

\bibliographystyle{abbrv}
\bibliography{RefsLLMW}
\end{document}